\documentclass[12pt]{article}
\usepackage{listings}
\usepackage{xcolor}
\usepackage[utf8]{inputenc}
\usepackage{amsmath, amssymb}
\usepackage{amsthm}
\usepackage{geometry}
\usepackage{graphicx}
\usepackage{verbatim}
\usepackage{ulem}
\usepackage{orcidlink}
\usepackage{ulem}
\usepackage{subcaption}
\usepackage{authblk}
\usepackage{hyperref}

\title{On the closed solution of a  problem coupling  fluid  infiltration with a hydration reaction}

\author{Diego Guevara$^{1,2}$, Sabrina Roscani$^{1,3}$, Piotr Rybka$^{4}$, and Vaughan Voller$^{5}$}

\date{
  \small
  $^1$CONICET, Argentina\\
  $^2$Departamento de Matemática, ECEN, FCEIA, Universidad Nacional de Rosario, Pellegrini 250, Rosario, 2000, Argentina\\
  $^3$Departamento de Matemática, FCE, Universidad Austral, Paraguay 1950, Rosario, 2000, Argentina\\
  $^4$Faculty of Mathematics, Informatics and Mechanics, University of Warsaw, ul. Banacha 2 02-097, Warsaw , Poland\\
  $^5$Department of Civil, Environmental and Geo- Engineering, University of Minnesota, Minneapolis, MN 55455, USA\\
\\
  (dguevara@fceia.unr.edu.ar; sroscani@austral.edu.ar ; rybka@mimuw.edu.pl ;
volle001@umn.edu)
}

\definecolor{wineRed}{rgb}{0.7,0,0.3}

\newtheorem{theo}{Theorem}

\newtheorem{remark}{Remark}

\newcommand{\bbR}{{\mathbb R}}

\author{Diego Guevara$^{1,2}$, Sabrina Roscani$^{1,3}$, Piotr Rybka$^{4}$, and Vaughan Voller$^{5}$}

\date{
  \small
  $^1$CONICET, Argentina\\
  $^2$Departamento de Matemática, ECEN, FCEIA, Universidad Nacional de Rosario, Pellegrini 250, Rosario, 2000, Argentina\\
  $^3$Departamento de Matemática, FCE, Universidad Austral, Paraguay 1950, Rosario, 2000, Argentina\\
  $^4$Faculty of Mathematics, Informatics and Mechanics, University of Warsaw, ul. Banacha 2 02-097, Warsaw , Poland\\
  $^5$Department of Civil, Environmental and Geo- Engineering, University of Minnesota, Minneapolis, MN 55455, USA\\
  \textbf{Keywords:} Moving boundary problem, closed solution,
infiltration,
hydration.\\
  \textbf{MSC2010:}  35R35 - 	35C05  .\\
  (dguevara@fceia.unr.edu.ar; sroscani@austral.edu.ar ; rybka@mimuw.edu.pl ;
volle001@umn.edu)
}
\begin{document}
\maketitle

\begin{abstract}
We present a one-dimensional model for water infiltration coupled with a hydration reaction, relevant to coupled transport and chemical processes in the Earth's subsurface. In this model the sharp interface separating the saturated and dry regions evolves over time, leading to a moving free boundary problem. Consistent with recently presented numerical treatments in the literature, this solution indicates an interesting dynamic for the free boundary. At early time, for a given domain  porosity,  the infiltration front advances with a specific square-root-in-time behavior.  At later time, depending on the consumption of the hydration reaction, the front advance  can exhibit square-root, linear, or exponential forms.   
The presented closed solution provides  an analytical tool that can be used to quantify important behavior in coupled transport and reaction systems.
\\
  \textbf{Keywords:} Moving boundary problem, closed solution,
infiltration,
hydration.\\
\textbf{MSC2010:}  35R35 - 	35C05
\end{abstract}

\section{Motivation and formulation of infiltration--hydration model }
Environmental remediation processes, e.g., the storage of $CO_2$ through mineralization, \cite{Matter2016}, \cite{nisbet2024carbon}, often involve the infiltration of a reactive fluid into the Earth’s subsurface \cite{SteefelMaher2009}.  An important and illustrative example of such a process is the hydration reaction that occurs when water infiltrates into an initially dry porous rock mass. A process that under the right conditions, can lead to so called “reaction-driven fractures”, \cite{uno2022volatile}, that enhance the transport of the reactive fluids.  Under the condition of a fixed pressure head applied at the origin of a one-dimensional horizontal domain and the assumptions of (i) a homogeneous porous media,  (ii) a sharp infiltration front separating wet and dry regions (the Green-Ampt assumption \cite{green1911studies, neuman1976wetting}), and (iii) a  first order hydration reaction that consumes the infiltrating water, a model of this process can be constructed, \cite{detournay2025coupled, voller2025assessment}.  Within the saturated region $0<x<s(t)$, the flow satisfies Darcy's law,
\begin{equation*}
q=-\frac{\kappa}{\mu}\frac{\partial p}{\partial x},
\end{equation*}
where $q$ is the Darcy flux, $\kappa$ the intrinsic permeability, $\mu$ the dynamic viscosity, and $p=p(x,t)$ the fluid pressure at position $x$ and time $t$.
If $C=C(x,t)$ denotes the concentration of the reactive species the first order reaction evolution is governed by
\begin{equation*}
\frac{\partial C}{\partial t}
(x,t)=
-kC(x,t),
\qquad
x>0,
\qquad
t>h(x),
\end{equation*}
with
\begin{equation*}
C(x,h(x))=C_0,
\end{equation*}
where $k$ is the reaction rate constant and $h(x)$ denotes the arrival time of the infiltration front at position $x$.
Combining Darcy's law with conservation of mass yields
\begin{equation*}
\frac{\kappa}{\mu}\frac{\partial^2 p}{\partial x^2}
=
\nu k C,
\qquad
0<x<s(t),
\end{equation*}
where $\nu$ is the molar volume of the hydration reactant. Assuming a unit stoichometry, the right hand side represents the volume of  water per unit volume media absorbed by the hydration reaction. 
The pressure boundary conditions are
\begin{equation*}
p(0,t)=p_0,
\qquad
p(s(t),t)=0, \quad t>0
\end{equation*}
while conservation of mass at the moving infiltration front---the free boundary---gives the Stefan-type condition
\begin{equation*}
-\frac{\kappa}{\mu}
\left.
\frac{\partial p}{\partial x}
\right|_{s}
=
\phi\frac{ds}{dt}.
\end{equation*}
where $\phi>0$ is the porosity of the media.

Recent works \cite{detournay2025coupled, voller2025assessment}  numerically investigates the above infiltration-hydration  model, obtaining insight on the conditions that will lead to reaction fracture. The numerical predictions also exhibit interesting early and late time behaviors. At very early times, while the hydration reaction is weak, the infiltration front advances with the square root in time behavior associated with the problem of infiltration in the absence of hydration. As time advance and the hydration reaction becomes more extensive  the movement of the front drifts away from the square root in time dependence.  At later times, however, the water consumption of the hydration reaction concentrates in the close vicinity of infiltration front and a square root in time dependence, with a smaller pre-fector to the early time, is recovered.  Here we obtain a closed solution for this infiltration model with hydration. A solution that provides an analytical tool for identifying the conditions for reaction fracture and analytically recovers the cross over form the late and early square root in time infiltration behavior predicted by the numerical solution \cite{detournay2025coupled, voller2025assessment} 

\section{Dimensionless form of infiltration--hydration model }

Introducing the length and time scales  $\ell_*$
  and $t^*$, we can define the following dimensionless variables
\[
\overline{x} = \frac{x}{\ell_*}, \qquad
\overline{t} = \frac{t}{ t^*}, \qquad
\overline{s} = \frac{s}{\ell_*}, \qquad
\overline{p} = \frac{p}{p_0}, \qquad
\overline{\Gamma} = 1 - \frac{C}{C_0},
\]
where the length and time scales are $
\ell_* = \sqrt{2 \frac{\kappa}{\mu} \frac{p_0}{k}}$ and $
t_* = \frac{1}{k}$.

After that
we obtain the dimensionless quasi-steady problem for 
finding:

\begin{enumerate}
\item The position of the free boundary $\overline{s}\in C^1( \bbR_0^+) $ and
\item  the scalar field functions $\overline{p}\in C(Q)$, $\overline{\Gamma}\in L^\infty(Q)$, where
$
Q=\{(\overline x,\overline t)\colon 0\leq \overline x\leq \overline{s}(\overline t),\, \overline t>0  \}
$ 
and $\overline{p}(\cdot,\overline t)\in C^2(0,\overline{s}(\bar{t}
))$ for all $\overline t>0$,  $\overline{\Gamma}(\bar{x}
, \cdot)\in C^1(\bar{h}(\bar{x}),\infty)$ for all  $\overline x$ in $(0,\overline{s}(\bar{t}
))$.
\end{enumerate}
The triple $(\overline{s}, \overline{p}, \overline{\Gamma})$
is 
such that
\begin{equation}\label{dim-prob}
\begin{array}{lll}
(i)& \frac{\partial^2 \overline{p}}{\partial \overline{x}^2} = 2Y(1-\overline{\Gamma}), & 0<\overline{x}<\overline{s}(\overline{t}), \; \overline{t}>0,  \\ 
(ii) & -\frac{\partial \overline{p}}{\partial \overline{x}}(\overline{s}(\overline{t}),\overline{t}) = 2\phi \frac{d\overline{s}}{d\overline{t}},  & \overline{t}>0,\\
(iii)& \frac{\partial \overline{\Gamma}}{\partial \overline{t}} = (1-\overline{\Gamma}), & 0<\overline{x}<\overline{s}(\overline{t}), \; \overline{t}>0, \\ 
(iv) & \overline{p}(0,\overline{t}) = 1,\, \overline{p}(\overline{s}(\overline{t}),\overline{t}) = 0, & \overline{t}>0,\\
(v) & \overline{\Gamma}(\overline{s}(\overline{t}),\overline{t}) = 0, & \overline{t}>0,\\
(vi)& \overline{s}(0)= 0. & 
\end{array}
\end{equation}
Here, $Y =\nu C_0>0$ is the total volume of water consumed by the hydration reaction per unit volume of the porous  media.

\section[An explicit solution for the free boundary s]{An explicit solution for the free boundary $\overline s$}
 Our goal is to establish existence of solutions to \eqref{dim-prob}. Here is our observation.
\begin{theo}\label{t1} Let  $\phi>0$ and $Y\in \bbR$, then there exists a unique  global in time solution to \eqref{dim-prob}. Moreover, the solution $(\overline{s},  \overline{\Gamma}, \overline{p})$ is given by  formulas \eqref{s-sol},  \eqref{G-sol} and \eqref{p-sol}, and the free boundary $\bar{s}$ is a strictly increasing function.
\end{theo}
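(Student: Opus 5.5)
The plan is to reduce \eqref{dim-prob} to a scalar linear ODE for an auxiliary function of time, solve it explicitly, and then reconstruct $\overline\Gamma$ and $\overline p$ from $\overline s$. I drop the bars below.

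\textbf{Step 1: the concentration.} Suppose first that $s$ is strictly increasing on some interval $[0,T)$, and let $h=s^{-1}$ be the arrival time. Equation (iii) with the condition (v), read as $\Gamma(x,h(x))=0$, is a linear ODE in $t$ for each fixed $x$. It gives
\[
1-\Gamma(x,t)=e^{-(t-h(x))},\qquad 0<x<s(t).
\]
This is the candidate for \eqref{G-sol}.

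\textbf{Step 2: an integral identity for $s$.} Insert this into (i), so that $p_{xx}=2Ye^{-t}e^{h(x)}$. Integrate by parts once against $x$ on $(0,s(t))$ and use (iv):
\[
-1=p(s,t)-p(0,t)=\int_0^s p_x\,dx = s\,p_x(s,t)-2Ye^{-t}\int_0^{s}x\,e^{h(x)}\,dx .
\]
Now use the Stefan condition (ii) to replace $s\,p_x(s,t)$ by $-2\phi ss'$. Then change variables $x=s(\tau)$, so that $e^{h(x)}=e^{\tau}$. Writing $u=s^2$, this gives
\[
\phi\,u'(t)=1-Ye^{-t}\int_0^t u'(\tau)e^{\tau}\,d\tau .
\]
Set $w(t)=\int_0^t u'(\tau)e^\tau d\tau$. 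Then $w'=u'e^t$, and the identity becomes the linear problem
\[
\phi w'+Yw=e^t,\qquad w(0)=0 .
\]

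\textbf{Step 3: solving the ODE.} For $\phi+Y\neq0$ the solution is $w=(e^t-e^{-Yt/\phi})/(\phi+Y)$. Hence
\[
u'(t)=\frac{1+\frac{Y}{\phi}e^{-(1+Y/\phi)t}}{\phi+Y},\qquad s(t)=\sqrt{\int_0^t u'(\tau)\,d\tau}.
\]
For $\phi+Y=0$ one gets $w=te^t/\phi$ and $u'=(1+t)/\phi$. This yields \eqref{s-sol}.

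I then check $u'>0$ in every case.
\begin{itemize}
\item If $\phi+Y>0$, the numerator is at least $\min(1,1+Y/\phi)>0$.
\item If $\phi+Y<0$, the numerator is at most $1+Y/\phi<0$ and is decreasing, while the denominator is negative.
\item If $\phi+Y=0$, positivity of $(1+t)/\phi$ is immediate.
\end{itemize}
So $s$ is strictly increasing. This also exhibits the asymptotics: $u'\to1/\phi$ as $t\to0$, $u'\to 1/(\phi+Y)$ as $t\to\infty$ when $\phi+Y>0$, linear growth of $u$ when $\phi+Y=0$, and exponential growth when $\phi+Y<0$.

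\textbf{Step 4: existence.} With $s$ given by this formula, define $h=s^{-1}$ and $\Gamma$ as in Step 1. Define $p$ as the unique solution of the linear two-point problem (i), (iv) on $(0,s(t))$, which is obtained by integrating twice; this gives \eqref{p-sol}. Properties (iii), (iv), (v), (vi) then hold by construction. Reversing the integration by parts in Step 2, together with the ODE for $w$, shows that $-p_x(s,t)=2\phi s'$, which is (ii). The required regularity is read off from the formulas.

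\textbf{Step 5: uniqueness, the main obstacle.} The derivation in Steps 1--3 presupposes that $s$ is monotone, so that $h$ exists and (v) is meaningful. I plan a continuation argument. Let $T^*$ be the supremum of times $T$ up to which a given solution $s$ is strictly increasing. On $[0,T^*)$ the computation forces $s$ to coincide with the explicit formula.

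The delicate point is the start of this argument, namely showing $T^*>0$. Near $t=0$ the interval $(0,s(t))$ is tiny and $\Gamma\approx0$, so $|p_x+1/s|\lesssim |Y|s$. Then (ii) gives $2\phi s'\ge 1/s - C|Y|s>0$ for small $t$, hence $T^*>0$.

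Finally, if $T^*<\infty$, the explicit formula gives $s'(T^*)>0$, so $s$ stays increasing beyond $T^*$. This contradicts the maximality of $T^*$, so $T^*=\infty$. Uniqueness of $\Gamma$ and $p$ then follows from uniqueness for the linear ODE (iii) and the linear boundary value problem (i), (iv).
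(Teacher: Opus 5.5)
Your proof is correct, and its computational core is the paper's. Your auxiliary function is $w(t)=2e^{t}J(t)$, where $J(t)=\int_0^{s(t)}y\,(1-\Gamma(y,t))\,dy$ is the paper's moment. Your equation $\phi w'+Yw=e^t$ is exactly the paper's $J'+(1+Y/\phi)J=1/(2\phi)$.

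The difference is the order of operations. The paper never solves for $\Gamma$ before obtaining the ODE. It differentiates $J$ directly by the Leibniz rule, using (iii) in the interior and (v) only to kill the boundary term, which gives $J'+J=ss'$. Combined with the moment identity $2\phi ss'=1-2YJ$, this yields the linear ODE for \emph{any} solution. Hence $s^2$, and so $s$, is determined at once, and monotonicity comes out as a consequence rather than a prerequisite.

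You instead insert $1-\Gamma=e^{-(t-h(x))}$ first and change variables $x=s(\tau)$, which already needs $h=s^{-1}$. That is why you need the continuation argument of Step 5. That argument is sound, but the small-time estimate only needs $\Gamma\in L^\infty(Q)$, via $|p_{xx}|\le 2|Y|(1+\|\Gamma\|_\infty)$. It does not need $\Gamma\approx 0$, which you could not justify before knowing monotonicity. Concretely, $p_x(s,t)=-1/s+\frac1s\int_0^s x\,p_{xx}\,dx$ gives $|p_x(s,t)+1/s|\le |Y|(1+\|\Gamma\|_\infty)\,s$.

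In exchange, your route has two advantages over the paper's write-up:
\begin{itemize}
\item You verify explicitly that the constructed triple satisfies the Stefan condition (ii), by reversing the integration by parts. The paper only appeals to classical arguments.
\item Your sign analysis of $u'=(s^2)'$ gives positivity of $s^2$ and strict monotonicity of $s$ in one stroke, uniformly over all cases including $Y=0$. The paper checks $s^2>0$ and $ss'=J+J'>0$ separately and case by case.
\end{itemize}

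Finally, your own estimate $2\phi s'\ge 1/s-C|Y|s$ shows $s'(t)\to\infty$ as $t\to0^+$, consistent with $s\sim\sqrt{t/\phi}$. So the requirement $s\in C^1(\bbR_0^+)$ in the paper's solution class cannot hold literally and must be read on $(0,\infty)$. This affects the paper's formulation as much as your claim in Step 4 that the regularity is read off from the formulas.
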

\begin{proof}
In the aim to solve problem \eqref{dim-prob}, we integrate $(\ref{dim-prob})-(i)$ between $\overline{x}$ and $\overline{s}(\overline{t})$,  
\begin{equation}\label{class-2}
    \frac{ \partial \overline{p}}{\partial \overline{x}}(\overline{x},\overline{t})=-2\phi \overline{s}'(\overline{t})-2Y\int_{\overline{x}}^{\overline{s}(\overline{t})}(1-\overline{\Gamma}(\overline{y},\overline{t}))\,d\overline{y}.
\end{equation}
Integrating now \eqref{class-2} between $0$ and $\overline{s}(\overline{t})$ using Fubini's theorem and condition $(\ref{dim-prob})-(iv)$, we obtain
\begin{equation}\label{class-4}
    2\phi \overline{s}'(\overline{t})\overline{s}(\overline{t})=1-2Y\int_0^{\overline{s}(\overline{t})}\overline{y}(1-\overline{\Gamma}(\overline{y},\overline{t}))\,d\overline{y}.
\end{equation}

Now, we define the auxiliary function 
\begin{equation}\label{J}
J(\overline{t}) := \int_0^{\overline{s}(\overline{t})} \overline{y} \bigl(1 - \overline{\Gamma}(\overline{y},\overline{t})\bigr) \, d\overline{y}.  
\end{equation}
Notice that $J$ plays a fundamental role in finding the exact solution through forming an appropriate ODE in $J$. Towards this purpose, we differentiate it with respect to time and use equation $(\ref{dim-prob})-(iii)$ and condition $(\ref{dim-prob})-(v)$ to obtain
\begin{equation}\label{class-5}
J'(\overline{t}) + J(\overline{t}) = \overline{s}(\overline{t})\overline{s}'(\overline{t}). 
\end{equation}
Then, due to \eqref{class-4} and \eqref{class-5} we obtain the desired ODE for the auxiliary function \eqref{class-4}

\begin{equation}\label{class-6}
J'(\overline{t}) + \left(1 + \frac{Y}{\phi}\right) J(\overline{t}) = \frac{1}{2\phi}.    
\end{equation}
We distinguish three cases according to the values of \(Y+\phi\) and \(Y\).
\\ \textit{Case 1.} \(Y=0\).
In this case \eqref{class-4}, with the initial condition \(\bar s(0)\), gives  
\begin{equation}\label{Y=0}
\bar s(t)= \sqrt{\frac {\bar t}{\phi}},
\end{equation}
the standard  square root in time solution for the movement of an infiltration front in the absences of a hydration reaction. 
\\ \textit{Case 2.} \(Y+\phi=0\) and \(Y \ne 0\).
Since \(Y+\phi=0\), we have \(1+\frac{Y}{\phi}=0\), and \eqref{class-6} reduces to
$
J'(\overline{t})=\frac{1}{2\phi}.
$
Using the initial condition \(J(0)=0\), which follows from \eqref{dim-prob}\textit{-(vi)}, we obtain
$J(\overline{t})=\frac{\overline{t}}{2\phi},$  from where
\begin{equation}\label{Y-PHI=0}
\bar s^2(\bar t) =\frac{1}{\phi} \bar t + 
\frac  {1}{2\phi} \bar t^2.
\end{equation}\\  
\textit{Case 3.} \(Y+\phi\neq0\) and \(Y \ne 0\).

Solving \eqref{class-6} and noting that  the initial condition $\ref{dim-prob}-(vi)$ gives  \(J(0)=0\)  we obtain
\begin{equation}\label{sol-J}
    J(\overline{t}) = \frac{1}{2\phi + 2Y} \left[1-\exp\left(-\left(1 + \frac{Y}{\phi}\right)\overline{t}\right)\right].
\end{equation}
Next, we insert \eqref{sol-J} in \eqref{class-4} and integrate between $0$ and $\overline{t}$ to obtain an explicit representation for $\overline{s}^2$
\begin{equation}\label{s^2}
    \overline{s}(\overline{t})^2 = \frac{1}{\phi + Y} \overline{t} + \frac{Y}{(\phi + Y)^2} \left(1 - \exp\left(-\left(1 + \frac{Y}{\phi}\right)\overline{t}\right)\right).
\end{equation}

It remains to verify that $\overline{s}$ is well defined.  If $Y+\phi > 0$, this follows directly from the assumption $\phi>0$. If $Y+\phi < 0$ then  $\frac{Y}{\phi}<-1$, using the inequality $e^x-1>x$, which holds in $\mathbb{R}^+$, we get the desired result. Hence, $ \bar{s}$ is well defined for all $\bar{t}\ge 0$.

Moreover, in both cases, \eqref{sol-J} and \eqref{class-6} imply $J(\bar{t})>0$ and $J'(\bar{t})>0$ for $\bar{t}>0$; together with \eqref{class-5}, this gives $\bar{s}(t) \bar{s}'(\bar{t})>0$.

Finally,  to respect of the condition that the domain is in the positive half-space, we take the positive square root to obtain the exact formula for the free boundary
\begin{equation}\label{s-sol}
    \overline{s}(\overline{t}) = \sqrt{ \frac{1}{\phi + Y} \overline{t} + \frac{Y}{(\phi + Y)^2} \left(1 - \exp\left(-\left(1 + \frac{Y}{\phi}\right)\overline{t}\right)\right) }.
\end{equation}

The exact solution \eqref{s-sol} describes the advance of the free boundary, whose behavior had been studied numerically in previous work
 \cite{detournay2025coupled, voller2025assessment}
and whose particular features will be examined in the next section.
Now, since \eqref{s-sol} is a strictly increasing function, its inverse $\overline{s}^{-1}$ is well defined and  the boundary condition $(\ref{dim-prob})-(v)$ becomes an initial condition via 
\begin{equation}\label{IC-G}
  \overline{\Gamma}(\overline{x}, \overline{s}^{-1}(\overline{x})) = 0,  
\end{equation}
leading to the closed  implicit  representation for the hydration function
\begin{equation}\label{G-sol}
   \overline{\Gamma}(\overline{x},\overline{t}) = 1 - \exp\left( \overline{t} - \overline{s}^{-1}(\overline{x}) \right).
\end{equation}
Having obtained $\overline{\Gamma}$ in a closed form, the pressure $\overline{p}$ follows after 
integrating \eqref{class-2} with respect to $\overline{x}$ and using the boundary condition $(\ref{dim-prob})-(iv)$. More explicitly,  
\begin{equation}\label{p-sol}
\overline{p}(\overline{x},\overline{t}) = 1 - 2\phi \overline{s}'(\overline{t})\,\overline{x} - 2Y \int_0^{\overline{x}} \int_{y}^{\overline{s}(\overline{t})} \exp\bigl(\overline{t} - \overline{s}^{-1}(z)\bigr) \,dz\,dy.
\end{equation}

By using classical arguments of partial differential equations we can state that the triple $\{\overline{s}, \overline{\Gamma},\overline{p}\}$ given by \eqref{s-sol},  \eqref{G-sol} and \eqref{p-sol} respectively, where $\overline{s}^{-1}$ is defined through the strict monotonicity of $\overline{s}$,  is after chose direction the unique solution to problem \eqref{dim-prob}. 
 Indeed, let $\{\overline{s_1}, \overline{\Gamma_1},\overline{p_1}\}$ and $\{\overline{s_2}, \overline{\Gamma_2},\overline{p_2}\}$be two solutions of problem  \eqref{dim-prob}. Proceeding as in \eqref{class-2} to \eqref{class-6} we obtain $J_1(\bar{t})=J_2(\bar{t})$ for every $\bar{t}\geq 0$, from which it follows that $\bar{s_1}(\bar{t})=\bar{s_2}(\bar{t})$ for every $\bar{t}\geq 0$, which is a  non-negative increasing function. Consequently, for each fixed $\overline{x}$  this identity yields $\bar{s}_1^{-1}(\overline{x})=\bar{s}_2^{-1}(\overline{x})$,  and the uniqueness of solution to the initial-boundary value problem \eqref{dim-prob}$-(iii)-$\eqref{IC-G} then gives $\bar{\Gamma}_1=\bar{\Gamma}_2$.  Finally, by the uniqueness of solution to system $(\ref{dim-prob})-(i)$, $(\ref{dim-prob})-(ii)$ and $(\ref{dim-prob})-(iv)$ for given boundary and source data ($\bar{s}$ and $\bar{\Gamma}$), we conclude that $\bar{p}_1=\bar{p}_2$.
\end{proof}

\begin{remark} Note that the physics of hydration in a porous media dictates that
the porosity $\phi$ is strictly   positive. Positivity of $Y$ corresponds to water consumption, while $Y<0$ means water release.
From a mathematical perspective, however, our analysis only requires 
$\phi>0$ and no restriction on $Y$, see below.  
\end{remark}

\begin{remark}
 Note also that, at early time $\bar{t}<<1$, the eq.(\ref{s-sol}) provides a solution where $\bar s$ moves as the square root of time, matching the standard infiltration solution in the absence hydration \(Y=0\) given in eq.(\ref{Y=0}). We will quickly implement this assertion in the next section.
\end{remark}
\section{Crossover behavior of the free boundary }

In order to confirm the dual limiting behavior previously observed in  \cite{detournay2025coupled, voller2025assessment},
 whereby the solution approaches two distinct limit functions in the early  and late time regimes, we examine the asymptotic behavior of the exact solution \eqref{s-sol} in both limits.
\begin{theo}
Let us suppose that  $(\overline{s}, \overline{p}, \overline{\Gamma})$ is a unique solution to \eqref{dim-prob}.\\
(i) If $\phi>0$, then
$$
\lim_{\overline t\to 0^+} \overline s(\overline t)/ \sqrt{\frac {\overline t}\phi }=1.
$$
(ii) If $\phi$ and $\phi+Y$ are positive, then
$$
\lim_{\overline t\to \infty} \overline s(\overline t)/ 
\sqrt{\frac {\overline t}{\phi+ Y }}=1.
$$
\end{theo}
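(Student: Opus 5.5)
By Theorem \ref{t1}, the solution is unique and its free boundary is given by the closed formulas. Both limits therefore reduce to elementary asymptotics of $\overline s^2(\overline t)$. We compare $\overline s^2$ with $\overline t/\phi$ or $\overline t/(\phi+Y)$ and then use continuity of the square root, since $\overline s\ge 0$. We split according to the three cases in the proof of Theorem \ref{t1}.

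For (i), write $a=1+Y/\phi$. In Case 3 ($a\neq 0$), expand the exponential in \eqref{s^2}: $1-e^{-a\overline t}=a\overline t+O(\overline t^2)$ as $\overline t\to 0^+$. This gives
\begin{equation*}
\overline s^2(\overline t)=\frac{\overline t}{\phi+Y}+\frac{Y}{(\phi+Y)^2}\cdot\frac{\phi+Y}{\phi}\,\overline t+O(\overline t^2)=\frac{\overline t}{\phi+Y}\Bigl(1+\frac{Y}{\phi}\Bigr)+O(\overline t^2)=\frac{\overline t}{\phi}+O(\overline t^2).
\end{equation*}
Dividing by $\overline t/\phi$ yields $\overline s^2\phi/\overline t\to 1$, so $\overline s/\sqrt{\overline t/\phi}\to 1$. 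In Case 1, \eqref{Y=0} gives the ratio identically equal to $1$. In Case 2, \eqref{Y-PHI=0} gives $\phi\,\overline s^2/\overline t=1+\overline t/2\to 1$. Only $\phi>0$ is needed here, so the claim holds for every $Y$.

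For (ii), the hypothesis $\phi+Y>0$ rules out Case 2, and it gives $a>0$ in Case 3. Hence $e^{-a\overline t}\to 0$, and the second term in \eqref{s^2} stays bounded, with absolute value at most $|Y|/(\phi+Y)^2$. Therefore
\begin{equation*}
\frac{(\phi+Y)\,\overline s^2(\overline t)}{\overline t}=1+\frac{Y}{(\phi+Y)\,\overline t}\bigl(1-e^{-a\overline t}\bigr)\longrightarrow 1 \quad (\overline t\to\infty).
\end{equation*}
Case 1 ($Y=0$) is again exact. Taking square roots finishes the proof.

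The only delicate point is bookkeeping rather than analysis. The first-order Taylor term of the exponential must combine with $\overline t/(\phi+Y)$ to produce exactly $\overline t/\phi$, and we check this with the identity $(1+Y/\phi)/(\phi+Y)=1/\phi$. We also have to make sure the degenerate cases $Y=0$ and $Y+\phi=0$, which are not covered by \eqref{s-sol}, are each handled separately.
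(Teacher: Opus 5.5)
Your proof is correct and follows the paper's argument. For the early-time limit you use a first-order Taylor expansion of the exponential in \eqref{s-sol}, with the case $Y+\phi=0$ handled via \eqref{Y-PHI=0}; for the late-time limit you use the decay of the exponential term, and your explicit bound simply makes precise the paper's one-line remark that this contribution vanishes. Your separate treatment of $Y=0$ is harmless but not needed, since setting $Y=0$ in \eqref{s-sol} already gives $\overline s=\sqrt{\overline t/\phi}$.
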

\begin{proof} If $\phi+Y=0$, then (i) follows immediately from formula \eqref{Y-PHI=0}. Otherwise, for early times, we use a Taylor expansion for the exponential term  
\begin{equation}\label{taylor}
\exp\left(-\left(1+\frac{Y}{\phi}\right)\overline{t}\right) = 1 - \left(1+\frac{Y}{\phi}\right)\overline{t} + O(\overline{t}^2)\end{equation}
and substitute \eqref{taylor} into \eqref{s-sol} to obtain
\[
\overline{s}^2(\overline{t}) = \frac{1}{\phi+Y}\overline{t} + \frac{Y}{(\phi+Y)^2}\left[\left(1+\frac{Y}{\phi}\right)\overline{t} + O(\overline{t}^2)\right].
\]
The linear terms combine as
$
\frac{1}{\phi+Y}\overline{t} + \frac{Y}{(\phi+Y)^2}\left(1+\frac{Y}{\phi}\right)\overline{t} = \frac{1}{\phi}\overline{t}.
$
Therefore,
\[
\overline{s}(\overline{t}) \sim \sqrt{\frac{\overline{t}}{\phi}} \quad \text{as} \quad \overline{t}\to 0^+.
\]

Part (ii). When $\phi>0$ and $\phi+Y>0$, then for large
times  the exponential contribution vanishes leading to 
\begin{equation*}
\lim_{\bar t\to \infty} \overline{s}(\overline{t})
\bigg/\sqrt{\frac{\overline{t}}{\phi+Y}} =1.
\qedhere
\end{equation*}
\end{proof}
Interestingly, for small values of $\bar t$ the behavior of the interface is universal, depending only on positive $\phi$. More precisely \(\overline{s} \sim \sqrt\frac{\overline{t}}{\phi}\); which, as noted above,  is the infiltration solution in the absence of hydration (\(Y=0\)) .  We also notice that 
the free boundary exhibits a clear crossover from an early-time behavior to a late-time behavior, which depends on $Y$. When $Y+\phi$ is positive, then we see
\(\overline{s} \sim \sqrt{\frac{\overline{t}}{(\phi + Y)}}\). 

Figure \ref{Figure1} illustrates the possible behavior of analytical solution given by eq.(\ref{s-sol})  for  the movement of an infiltration front $\bar{s}(t)$  in the presence of a hydration reaction. 
Figure \ref{fig:1a} shows a log-log plot of  the advance  of the infiltration front for cases where  $Y>-\phi$ and $\phi>0$. 
This plot highlights the early to late time square-root crossover and, for physical hydration cases ($Y>0$), also analytically validates  the previously presented numerical solution in \cite{detournay2025coupled, voller2025assessment}. 
Figure \ref{fig:1b} shows a log-log plot for the movement of the infiltration front  for a special case where $Y+\phi=0$ (linear late time)  and the more general case where $Y<-\phi$ (exponential late time)




\begin{figure}[ht]
    \centering
    \begin{subfigure}[b]{0.48\linewidth}
        \centering
        \includegraphics[width=\linewidth]{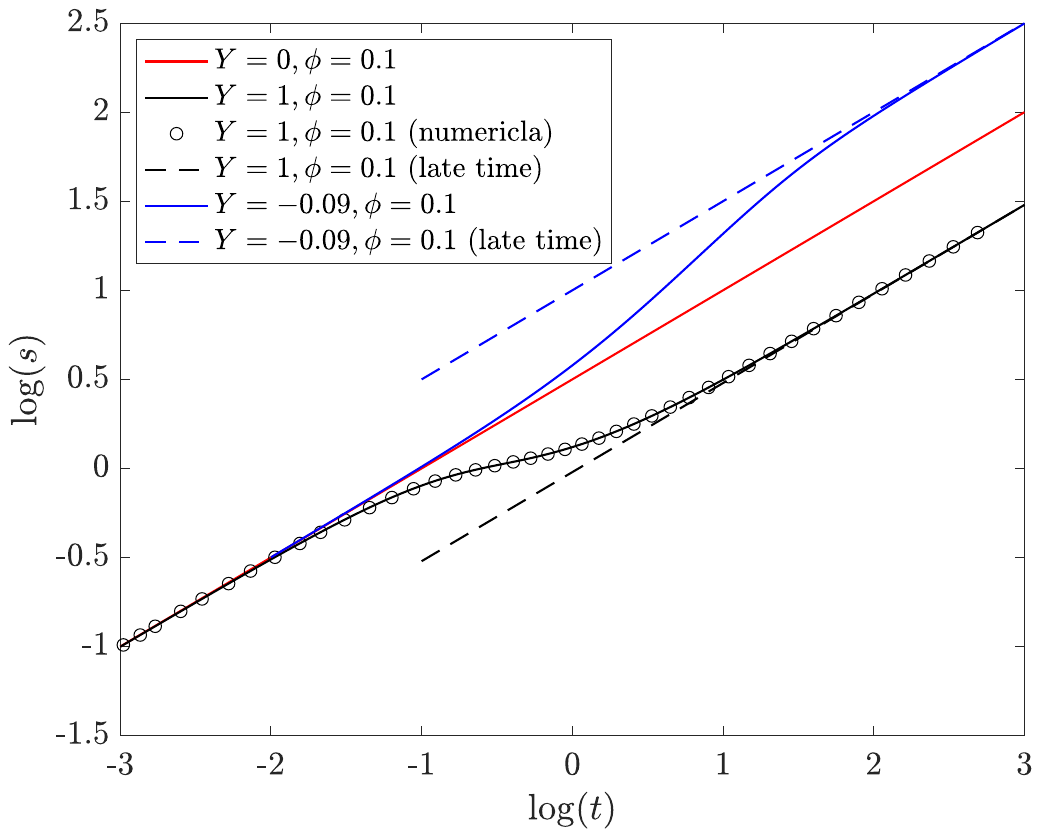}
        \caption{{ Infiltration front movements when $\phi>0$ and $Y+\phi>0$, highlighting the early and late time square-root-in-time behaviors; dashed lines in figure have a slope of $\frac 1 2$.  In the special case where $Y=0$ (red line) the front follows a single square-root for all $t>0$.  When $0>Y>-\phi$  (blue line)  the late time square-root  has a larger pre-factor,  whereas for Y>0 the pre-factor is smaller. The black circles in the figure are obtained using the $Y>0$ numerical solution presented in  \cite{detournay2025coupled, voller2025assessment}
        }}
        \label{fig:1a}
    \end{subfigure}
    \hfill
    \begin{subfigure}[b]{0.48\linewidth}
        \centering
        \includegraphics[width=\linewidth]{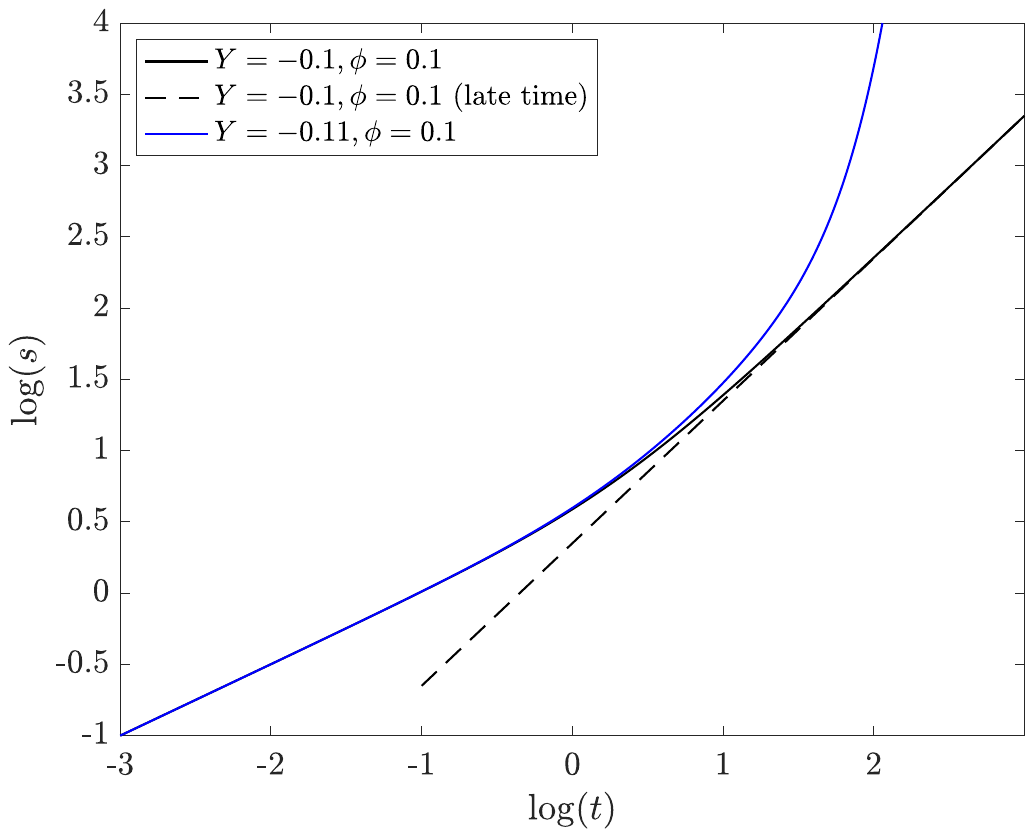}
        \caption{{ Infiltration front movements when $\phi>0$ and $Y\leq -\phi$. In all cases the early time solution follows the square-root-in-time solution associated with  no hydration $(Y=0)$.
        In the special case where $Y=-\phi$ (full black line), as time increases the trajectory of the front crosses over to a linear in time behavior; the dashed black line  has a slope of 1. In the more general case where $Y<-\phi$, (blue line)  the later time behavior is exponential.        
        }}
        \label{fig:1b}
    \end{subfigure}
    \caption{Behavior of the analytical solution \eqref{s-sol} for the movement of the infiltration front $s(t)$ in the presence of a hydration reaction. (a) Case $\phi>0$, $Y>-\phi$. (b) Case $\phi>0$, $Y\leq-\phi$.}
    \label{Figure1}
\end{figure}

\section{Conclusions}
In the absence of a hydration reaction, the problem introduced here reduces to the form of a  Stefan melting problem in the limit of a vanishing specific heat. This is perhaps the most basic of free  boundary problems with a  square-root-in-time dependence  for the free boundary. The introduction of the hydration reaction, however, greatly enriches this problem. Providing, as demonstrated here, a closed expression for the free boundary.
{ At early  times this solution matches the square-root-in-time dependence of infiltration in the absence of hydration, giving rise to a pre-factor for the square-root-in-time that only depends on the porosity of the medium.  The reason for this behavior is that
at early time the front is advancing at a rate which is significantly faster than the hydration reaction can consume water. At later time, however, depending on the consumption $Y$ of the hydration reaction, different behaviors for the front advance are possible; (i) a  square-root-in-time behavior, with a pre-factor that depends on both the porosity and and consumption ($Y>-\phi$), (ii)  a linear in time behavior depending solely on porosity ($Y=-\phi$), or (iii) an exponential behavior ($Y<-\phi$). } 

Beyond its interesting mathematical nature, the closed solution of the infiltration-hydration problem also has practical consequences. In particular, it can be used as the basis of an analysis to investigate the mechanics of the reaction fracture that may occur due to the in place eigenstrains induced by the hydration swelling of the porous media \cite{detournay2026damkohler}.

\section*{Acknowledgment}
The work of DG was in part performed during his visit to the University of Warsaw, which was supported by the excellence center IDUB. SR and DG were partially supported by the project PIP N° 11220220100532 from CONICET, and SR by Proyecto Austral N°006-25CI2001, from Universidad Austral. VV was supported as part of the Center on Geo-processes in Mineral Carbon Storage, an Energy Frontier Research Center funded by the U.S. Department of Energy, Office of Science, Basic Energy Sciences, under Award DE-SC0023429.{   The authors are grateful for discussions with Professor Emmanuel Detournay at the University of Minnesota}





\end{document}